\title{A theorem on shifted Hankel determinants}
\author{Mike Tyson\thanks{Email: mgtyson66@gmail.com}}
\date{}
\newtheorem{thm}{Theorem}
\begin{document}

\maketitle

Let $(\mu(n))_{n\ge 0}$ be a sequence with $\mu(0)=1$, and assume none of the Hankel determinants $H_n=\det(\mu(i+j))_{i,j=0}^{n-1}$ vanish. Then one can define the family of polynomials
\[
p_n(x)=\sum_{j=0}^n(-1)^{n-j}x^jp(n,j)=\frac{1}{H_n}\det\begin{pmatrix}
\mu(0) & \mu(1) & \cdots & \mu(n) \\
\vdots & \vdots & \ddots & \vdots \\
\mu(n-1) & \mu(n) & \cdots & \mu(2n-1) \\
1 & x & \cdots & x^n
\end{pmatrix},
\]
which are orthogonal with respect to the weight with $n$th moment $\mu(n)$. In \cite{2019arXiv190210468C}, Johann Cigler conjectured that
\[
\det(\mu(i+j))_{i,j=0}^{m-1}\cdot \det(p(i+m,j))_{i,j=0}^{n-1}=\det(\mu(i+j+n))_{i,j= 0}^{m-1}.
\]
The purpose of this note is to prove this conjecture.

For $0\le a\le j\le b$, let $D_j(\mu;a,b;\lambda)$ be the determinant of the $(b-a)$-by-$(b-a)$ matrix
\[
\begin{pmatrix}
	\mu(a) & \cdots & \mu(j-1) & \mu(j+1) & \cdots & \mu(b) \\
	\vdots & \ddots & \vdots & \vdots & \ddots & \vdots \\
	\mu(b-1) & \cdots & \mu(j+b-a-2) & \mu(j+b-a) & \cdots & \mu(2b-a-1)+\lambda
\end{pmatrix},
\]
where the $\lambda$ is added only to the bottom right entry. For $j<a$ or $b<j$ let $D_j(\mu;a,b;\lambda)=0$. We will also denote $D_j(\mu;a,b;0)$ by $D_j(\mu;a,b)$ and typically suppress the $\mu$. Note that when the $\mu(i)$ are left as variables and $a\le j\le b$, none of these determinants are identically zero. We will divide by finitely many of these determinants over the course of the proof, but the end result is a polynomial identity which therefore applies even when some $D_j(a,b;0)$ vanish.

Since $H_n=D_n(0,n)$ and $p(n,j)=D_j(0,n)/D_n(0,n)$, the conjecture is that $\det\left(D_j(0,i+m)\right)_{i,j=0}^{n-1}$ equals
\[
D_{m+n}(n,m+n)\cdot D_{m+1}(0,m+1)\cdots D_{m+n-1}(0,m+n-1).
\]
For instance, when $m=2$ and $n=3$ the conjecture is that
\[
\begin{pmatrix}
\begin{vmatrix}\mu(1) & \mu(2)\\\mu(2) & \mu(3)\end{vmatrix} & \begin{vmatrix}\mu(0) & \mu(2)\\\mu(1) & \mu(3)\end{vmatrix} & \begin{vmatrix}\mu(0) & \mu(1)\\\mu(1) & \mu(2)\end{vmatrix}\\
\begin{vmatrix}\mu(1) & \mu(2) & \mu(3)\\\mu(2) & \mu(3) & \mu(4)\\\mu(3) & \mu(4) & \mu(5)\end{vmatrix} & \begin{vmatrix}\mu(0) & \mu(2) & \mu(3)\\\mu(1) & \mu(3) & \mu(4)\\\mu(2) & \mu(4) & \mu(5)\end{vmatrix} & \begin{vmatrix}\mu(0) & \mu(1) & \mu(3)\\\mu(1) & \mu(2) & \mu(4)\\\mu(2) & \mu(3) & \mu(5)\end{vmatrix}\\
\begin{vmatrix}\mu(1) & \mu(2) & \mu(3) & \mu(4)\\\mu(2) & \mu(3) & \mu(4) & \mu(5)\\\mu(3) & \mu(4) & \mu(5) & \mu(6)\\\mu(4) & \mu(5) & \mu(6) & \mu(7)\end{vmatrix} & \begin{vmatrix}\mu(0) & \mu(2) & \mu(3) & \mu(4)\\\mu(1) & \mu(3) & \mu(4) & \mu(5)\\\mu(2) & \mu(4) & \mu(5) & \mu(6)\\\mu(3) & \mu(5) & \mu(6) & \mu(7)\end{vmatrix} & \begin{vmatrix}\mu(0) & \mu(1) & \mu(3) & \mu(4)\\\mu(1) & \mu(2) & \mu(4) & \mu(5)\\\mu(2) & \mu(3) & \mu(5) & \mu(6)\\\mu(3) & \mu(4) & \mu(6) & \mu(7)\end{vmatrix}
\end{pmatrix}
\]
has determinant
\[
\begin{vmatrix}\mu(3)& \mu(4)\\\mu(4) & \mu(5)\end{vmatrix}\begin{vmatrix}\mu(0)& \mu(1) & \mu(2)\\\mu(1) & \mu(2) & \mu(3)\\\mu(2) & \mu(3) & \mu(4)\end{vmatrix}\begin{vmatrix}\mu(0)& \mu(1) & \mu(2) & \mu(3)\\\mu(1) & \mu(2) & \mu(3) &\mu(4)\\\mu(2) & \mu(3) & \mu(4) & \mu(5)\\\mu(3) & \mu(4) & \mu(5) & \mu(6)\end{vmatrix}.
\]

The conjecture can be proven by induction on $n$ via the following theorem.

\begin{thm}
	Let $\nu(i)=\mu(i+1)$ for all $i\ge 0$. Then $\det\left(D_j(\mu;0,i+m)\right)_{i,j=0}^{n-1}$ equals
	\[
	D_0(\mu;0,m)\cdot \det\left(D_j(\nu;0,i+m)\right)_{i,j=0}^{n-2}\cdot\prod_{i=1}^{n-1}\frac{D_{i+m}(\mu;0,i+m)}{D_{i+m}(\mu;1,i+m)}.
	\]
\end{thm}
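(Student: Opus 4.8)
The plan is to prove the identity by row-reducing the $n\times n$ matrix $A=\bigl(D_j(\mu;0,i+m)\bigr)_{i,j=0}^{n-1}$ by means of a single three-term relation, after which the first column collapses and the determinant drops to the $\nu$-version one size smaller. First I would record the elementary dictionary between the $D$'s and the orthogonal polynomials: writing $H_b=D_b(\mu;0,b)$ for the Hankel determinant, the cofactor expansion of $p_b$ along its bottom row gives $D_j(\mu;0,b)=(-1)^{b-j}H_b\,[x^j]p_b(x)$, while shifting the column indices by one in the definition yields the three evaluations $D_0(\mu;0,b)=H_b(\nu)$, $D_b(\mu;0,b)=H_b(\mu)$, and $D_j(\mu;1,b)=D_{j-1}(\nu;0,b-1)$. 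I would also note the relation $D_j(\mu;a,b;\lambda)=D_j(\mu;a,b)+\lambda\,D_j(\mu;a,b-1)$, obtained by expanding along the bottom-right entry, since this is exactly what lets one match determinant sizes when working inside the $\lambda$-bordered family.

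The engine of the proof is the three-term relation
\[
D_0(\mu;0,b-1)\,D_j(\mu;0,b)-D_0(\mu;0,b)\,D_j(\mu;0,b-1)=D_b(\mu;0,b)\,D_j(\mu;1,b),
\]
valid for all $j$ once the vanishing conventions are accounted for. I would prove it through the orthogonality of $p_b$: writing $\mathcal L$ for the functional with $\mathcal L[x^i]=\mu(i)$, the determinant definition gives $\mathcal L[x^kp_b]=0$ for $0\le k\le b-1$, since replacing the bottom row $(1,x,\dots,x^b)$ by $(\mu(k),\dots,\mu(k+b))$ duplicates an existing row. This yields the Christoffel-type identity $x\,p^{\nu}_{b-1}(x)=p_b(x)-\tfrac{p_b(0)}{p_{b-1}(0)}\,p_{b-1}(x)$ for the orthogonal polynomials $p^\nu_{b-1}$ of $\nu$, because the difference is monic of degree $\le b-1$, is $\mathcal L$-orthogonal to $1,\dots,x^{b-2}$, hence a multiple of $p_{b-1}$, with the constant fixed at $x=0$; extracting $[x^j]$ and translating through the dictionary gives the displayed relation. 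I expect this step to be the main obstacle: the relation links maximal minors of the two different sizes $b$ and $b-1$, so it is not a single instance of Desnanot--Jacobi or Sylvester. The $\lambda$-border, through $D_j(\mu;a,b;\lambda)=D_j(\mu;a,b)+\lambda D_j(\mu;a,b-1)$, is the device that homogenizes the sizes and should make a condensation argument available as an alternative to the orthogonality route.

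With the three-term relation in hand, the rest is bookkeeping. Reading it with $b=i+m$ and observing that $\bigl(D_j(\mu;0,i+m-1)\bigr)_j$ is precisely row $i-1$ of $A$, it states
\[
\mathrm{row}_i(A)=\frac{D_0(\mu;0,i+m)}{D_0(\mu;0,i+m-1)}\,\mathrm{row}_{i-1}(A)+\frac{D_{i+m}(\mu;0,i+m)}{D_0(\mu;0,i+m-1)}\,\bigl(D_j(\mu;1,i+m)\bigr)_j.
\]
I would therefore apply the unipotent operations $\mathrm{row}_i\mapsto \mathrm{row}_i-\tfrac{D_0(\mu;0,i+m)}{D_0(\mu;0,i+m-1)}\mathrm{row}_{i-1}$ for $i=1,\dots,n-1$, which leave $\det A$ unchanged and replace row $i\ge1$ by $\tfrac{D_{i+m}(\mu;0,i+m)}{D_0(\mu;0,i+m-1)}\bigl(D_j(\mu;1,i+m)\bigr)_j$. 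Factoring the scalar out of each such row produces $\prod_{i=1}^{n-1}\frac{D_{i+m}(\mu;0,i+m)}{D_{i+m}(\mu;1,i+m)}$, using $D_0(\mu;0,i+m-1)=D_{i+m}(\mu;1,i+m)$, and leaves the determinant of the matrix $B$ whose top row is $\bigl(D_j(\mu;0,m)\bigr)_j$ and whose lower rows are $\bigl(D_j(\mu;1,i+m)\bigr)_j=\bigl(D_{j-1}(\nu;0,i+m-1)\bigr)_j$. Since $D_{j-1}(\nu;0,i+m-1)$ vanishes at $j=0$, the first column of $B$ is $\bigl(D_0(\mu;0,m),0,\dots,0\bigr)$, and expanding along it gives $D_0(\mu;0,m)\cdot\det\bigl(D_{j-1}(\nu;0,i+m-1)\bigr)_{i,j=1}^{n-1}=D_0(\mu;0,m)\cdot\det\bigl(D_j(\nu;0,i+m)\bigr)_{i,j=0}^{n-2}$, which is exactly the asserted factorization. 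Throughout, the operations divide by the determinants $D_0(\mu;0,i+m-1)$; treating the $\mu(i)$ as indeterminates these are nonzero, and since the final statement is a polynomial identity it persists after clearing denominators, as the paper's remark anticipates.
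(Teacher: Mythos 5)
Your proposal is correct, and its global shape coincides with the paper's proof --- you perform the same row operations (the paper's $\lambda_i$ is exactly your $-D_0(\mu;0,i+m)/D_0(\mu;0,i+m-1)$), factor the same scalars out of the modified rows using $D_0(\mu;0,i+m-1)=D_{i+m}(\mu;1,i+m)$, and identify the same lower-right block via $D_j(\mu;1,b)=D_{j-1}(\nu;0,b-1)$ --- but the crucial step is justified by a genuinely different argument. Where the paper stays inside linear algebra (the entries of row $i-1$ are cofactors of the entries of row $i$, so the operation produces the bordered determinants $D_j(0,i+m;\lambda_i)$; the vanishing of the $j=0$ entry forces a column dependency; Cramer's rule gives its coefficients; column operations finish), you isolate the content of that computation as the standalone three-term identity
\[
D_0(\mu;0,b-1)\,D_j(\mu;0,b)-D_0(\mu;0,b)\,D_j(\mu;0,b-1)=D_b(\mu;0,b)\,D_j(\mu;1,b),
\]
and prove it from orthogonality via the Christoffel-transform identity $x\,p^{\nu}_{b-1}(x)=p_b(x)-\tfrac{p_b(0)}{p_{b-1}(0)}\,p_{b-1}(x)$; multiplying the paper's equation $Q'_{i,j}=D_j(1,i+m)\,D_{i+m}(0,i+m)/D_{i+m}(1,i+m)$ through by $D_0(0,i+m-1)$ shows the two lemmas are literally the same identity, proved two ways. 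The paper's route is elementary and self-contained, and its formulation by column operations makes the final clearing-of-denominators remark transparent; yours is conceptually more revealing --- it identifies the shift $\mu\mapsto\nu$ as the Christoffel transform at $0$, which explains why the theorem (and, iterated, Cigler's conjecture) should be true, and the three-term identity is a clean reusable lemma. Two wording-level corrections, neither a gap: (i) your row operations must use the \emph{original} row $i-1$, so either perform them for decreasing $i$ (as the paper is careful to do) or interpret them, as your word ``unipotent'' suggests, as a single left multiplication by the unipotent lower-bidiagonal matrix; done sequentially for increasing $i$ they would not produce the rows you claim. (ii) In the Christoffel step, the polynomial you should call ``the difference'' is $p_b-x\,p^{\nu}_{b-1}$, which has degree $\le b-1$ (it is not monic), is $\mathcal{L}$-orthogonal to $1,\dots,x^{b-2}$, hence is a scalar multiple of $p_{b-1}$, the scalar being fixed at $x=0$ using the generic nonvanishing of $p_{b-1}(0)$.
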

\begin{proof}
	The $n=1$ case is trivial. Assume $n>1$ and let $Q$ be the matrix $\left(D_j(0,i+m)\right)_{i,j=0}^{n-1}$. Note that for $i\ge 1$ and $j<i+m$, the entry $Q_{i-1,j}$ appears as a cofactor in the expansion of $Q_{i,j}$. For $j\ge i+m$, $Q_{i-1,j}=0$. This means that adding $\lambda_i$ times the $(i-1)$th row to the $i$th row has the effect of replacing $Q_{i,j}=D_j(0,i+m)$ with $Q'_{i,j}=D_j(0,i+m;\lambda_i)$ for $j<i+m$. Since the entry $Q_{i-1,j}$ is generically nonzero, $\lambda_i$ can be chosen to make $D_0(0,i+m;\lambda_i)=0$. Perform this row operation on row $n-1$, then row $n-2$, and so on up to row $1$. Call the new matrix $Q'$.
	
	For $i\ge 1$, $Q'_{i,0}=0$, which (together with the nonvanishing of $Q_{i-1,0}$) implies that the last column of the underlying matrix of $D_0(0,i+m;\lambda_i)$ can be written as a linear combination of the first $i+m-1$ columns. That is, there are $a_1,\dots,a_{i+m-1}$ such that
	\[
	a_1\begin{pmatrix}\mu(1)\\ \vdots \\ \mu(i+m)\end{pmatrix}+\cdots+a_{i+m-1}\begin{pmatrix}\mu(i+m-1)\\ \vdots \\ \mu(2i+2m-2)\end{pmatrix}=\begin{pmatrix}\mu(i+m)\\ \vdots \\ \mu(2i+2m-1)+\lambda_i\end{pmatrix}.
	\]
	By restricting to the first $i+m-1$ coordinates of this equation and applying Cramer's rule, one finds that
	\[
	a_k=(-1)^{k+i+m-1}D_k(1,i+m)/D_{i+m}(1,i+m).
	\]
	
	For $j<i+m$, the solved-for column $(\mu(i+m),\dots,\mu(2i+2m-1)+\lambda_i)^\top$ appears in the underlying matrix of $D_j(0,i+m;\lambda_i)=Q'_{i,j}$. Substitute in the linear combination and perform column operations to remove summands of repeated columns. One is left with
	\[
	Q'_{i,j}=(-1)^{j+i+m-1}a_jD_{i+m}(0,i+m)=D_j(1,i+m)\frac{D_{i+m}(0,i+m)}{D_{i+m}(1,i+m)}.
	\]
	For $j=i+m$, $Q'_{i,j}$ was originally $D_{i+m}(0,i+m)$, so the above equation still holds. For $j>i+m$, both $Q'_{i,j}=D_j(0,i+m)$ and $D_j(1,i+m)$ are $0$, so the above equation again holds.
	
	Factor out $D_{i+m}(0,i+m)/D_{i+m}(1,i+m)$ from row $i$ of $Q'$ for each $i\ge 1$ to get a new matrix $Q''$. Like $Q'$, all entries in the leftmost column of $Q''$ are $0$ except for the top entry, which is $D_0(0,m)$. The bottom right $(n-1)$-by-$(n-1)$ submatrix of $Q''$ is $\left(D_{j}(1,i+m)\right)_{i,j=1}^{n-1}$, or $\left(D_j(\nu;0,i+m)\right)_{i,j=0}^{n-2}$. Therefore the determinant is as claimed.
\end{proof}

As an example, consider when $\mu(n)=C_n=\frac{1}{n+1}\binom{2n}{n}$ is a Catalan number. It can be verified via orthogonality that
\[
p_n(x)=\sum_{j=0}^n(-1)^{n-j}\binom{n+j}{n-j}x^j.
\]
As a result,
\[
\det\left(\binom{i+j+m}{i-j+m}\right)_{i,j=0}^{n-1}=\frac{\det(C_{n+i+j})_{i,j=0}^{m-1}}{\det(C_{i+j})_{i,j=0}^{m-1}}.
\]
The quotient on the right can be calculated with Dodgson condensation \cite{1999math......2004K} to be
\[
\prod_{1\le i\le j\le n-1}\frac{2m+i+j}{i+j}.
\]

\bibliographystyle{plain}
\bibliography{shiftedmoments}

\end{document}